    \newtheorem{theorem}{Theorem}[section]
    \newtheorem{lemma}[theorem]{Lemma}
    \newtheorem{example}[theorem]{Example}
    \newtheorem{remark}[theorem]{Remark}
    \journal{Journal of Differential Equations }
\begin{document}
    
    \begin{frontmatter}
    
    
    
    \title{Long-time behaviors of some stochastic differential equations driven by L\'{e}vy  noise}
    
     \author[label1]{I. Orlovskyi}
    \affiliation[label1]{organization={Department of Mathematical Analysis and Probability Theory, National  University of Ukraine "Igor Sikorsky Kyiv Polytechnic Institute"},
               addressline={Beresteyskyi Ave, 37},
                 city={Kyiv},
                postcode={03056},
                 country={Ukraine}}
    
    
     \author[label2]{F. Proske}
    \affiliation[label2]{organization={Department of Mathematics, University of Oslo},
               addressline={Blindern},
                 city={Oslo},
                postcode={0316},
                 country={Norway}}
     \author[label1,label2] {O. Tymoshenko}

    \begin{abstract}
    Using key tools such as It\^o formula for general semi-martingales, moments estimates for L\'{e}vy-type stochastic integrals and properties of regular varying functions we find conditions under which solutions of stochastic differential equation with jumps are almost sure asymptotically equivalent  nonrandom function with $t\to \infty$.
    
    \end{abstract}
    
    
    
    \begin{keyword}
    
    
    Stochastic differential equation \sep L\'{e}vy  noise \sep Poisson random measure \sep 
     Brownian motion; asymptotic behaviour. 
    \end{keyword}
    
    \end{frontmatter}
    
    
    \section{Introduction}
    The study of asymptotic properties of stochastic differential equations (SDEs) has a special place in the modern theory of random processes. Gikhman and Skorokhod \cite{GS}, Keller et al. \cite{KKR}, and later Buldygin et al. \cite{BKST}
    considered the exact order of growth of solutions of homogeneous SDEs  and found conditions, under which these solutions are asymptotically
    equivalent, as $t\to \infty$, to solutions of ordinary differential equations (ODEs). Closer investigations for multidimentional SDE are done by Samoy\v{i}lenko and Stanzhitskiy \cite{SS}. Almost-sure (a.s.) asymptotic properties of solutions for some classes of non-homogeneous SDEs were considered in \cite{KT} and \cite{BR}.
    
    The idea of determining asymptotic properties of solutions of equations excited by some random process with the solution of an ODE arises naturally. Stochastic models approximate the real processes
    much better than deterministic ones, but are more complicated in the investigation of issues related to asymptotic. By combining powerful methodological tools from two worlds, i.e., the field of ODEs on the one hand and the field of stochastic on the other, researchers solve essential mathematical problems.
    
    Works \cite{GS}, \cite{KKR}, \cite{BKST}, \cite{SS}, \cite{KT} \cite{BR} only focused on the case of SDEs driven by a Brownian motion. Unfortunately, Brownian motion cannot accurately describe stochastic disturbances in certain real systems, such as fluctuations in financial markets, because it is a continuous stochastic process. It is recognized that SDEs with L\'{e}vy noise are more suitable for describing these discontinuous systems, leading to increased interest in SDEs driven by noise with discontinuous jumps.   The case is particularly interesting when the noise is obtained from the  L\'{e}vy process using the  L\'{e}vy-It\^o decomposition into a continuous component perturbed by a Wiener process and a jump-shaped part with a Poisson random measure.  Since the long-time asymptotic behaviour of solutions to SDEs is very important, in this paper we study some asymptotic properties of equations where driving noise has jumps. 
      In  this paper we focus on SDEs driven by L\'{e}vy noise  of the form
      \begin{equation} \label{SDEwJm}
    dX(t)=g(X(t-))\phi(t-)dt+\sigma(X(t-))\theta(t-)dW(t)+\int_{\mathbb{R}}c(X(t-))\gamma(u)\tilde{N}(dt, du), 
      \end{equation}
     and we will search for conditions under which the solution of this equation is asymptotically defined by a non-random function. 
    
     D. Applebaum, M. Siakally \cite{AS} established asymptotic stability of SDEs class (\ref{SDEwJm}) with $\phi=\theta=1$ and $c(x)\gamma(t)=H(x,t).$ The similar problem  for  partial case of SDE  (\ref{SDEwJm})  with numerical approximation was considered in \cite{ChH}.  In \cite{Yus}, was investigated the conditions under which the solution $X$ of the equation from \cite{AS}  has the same asymptotic as corresponds ordinary differential equation. The author proved that in the case when a drift coefficient is  equivalent to some non-random positive degree function then
     $X(t)\sim ((1-\alpha)At)^{\frac{1}{1-\alpha}}$ a.s. as $t\to\infty$ for  $\alpha \in [0;1)$, $A>0.$
     
    In the multidimensional case the problem on identification of a limit of an ODE 
    with discontinuous drift that perturbed by a zero-noise is considered in \cite{PPm}.
     F. Proske and A. Pylypenko \cite{PP} study zero-noise limits of $\alpha -$stable noise perturbed ODE's which are driven by an irregular vector field $A$ with asymptotics $ A(x)\sim \overline{a}\left(\frac{x}{\left\vert x\right\vert }\right)\left\vert x\right\vert ^{\beta -1}$ at zero, where $\overline{a}>0$ is a continuous function and $\beta \in (0,1)$.
    Thus, the issue considered in this paper is a classical subject of stochastic analysis. However the asymptotic long-time 
     properties solution equation (\ref{SDEwJm})  wasn't  investigated yet.
     
     The rest of this paper is organized as follows. In Section 2, we introduce the model, notations and several
    necessary hypotheses. In Section 3, we formulate It\^o's formula and Dynkin's formula for special case SDE with jumps. In Section 4, we present  sufficient condition for a.s. asymptotic equivalence solution of
    SDEs driven by L\'{e}vy noise and solution ODE.  Finally, in Section 5, we
    conclude this paper with some  results with regard to  unboundedness of solutions SDE driven
    by  L\'{e}vy noise.

    \section{Preliminaries}
    Let $(\Omega,\mathfrak{ F}, (\mathfrak{F}_t, t\geq 0), \mathsf{P})$ be a filtered probability space that satisfies the usual hypotheses of completeness and right continuity. Assume that we are given  $\mathfrak{F}_t$-adapted  Wiener process $W=(W(t), t\geq0)$, and    $\mathfrak{F}_t$-adapted Poisson random measure $N$ defined on $\mathbb{R}^{+}\times (\mathbb{R}\setminus \{0\})$. Note that  the process $(W(t), t\geq 0)$ and  $(N(t), t\geq 0)$ are independent.
    
    We set 
    $$\widetilde{N}(dt, du)=N(dt, du)-dt\nu(du),$$
    where $\widetilde{N}$ is the compensated Poisson random measure, $\nu$  is an intensity measure such that $\displaystyle \int_{\mathbb{R}\setminus \{0\}}u^2\nu(du)< \infty.$ 
    
     Denote by $\mathbf{C}$ ($\mathbf{C}^{+}$ ) the class of all continuous (and positive) functions and by $\mathbf{C}^1$ ($\mathbf{C}^{1,
    +}$ ) the class of all continuously differentiable (and positive) functions.
    
     Consider SDEs driven by L\'{e}vy  noise of the form
     \begin{equation} \label{SDEwJmps}
    dX(t)=g(X(t))\phi(t)dt+\sigma(X(t))\theta(t)dW(t)+\int_{|u|<\mathcal{K}}c(X(t-))\gamma(u)\tilde{N}(dt, du), 
      \end{equation}
    on $t\geq 0$ with initial value $X(0)=X_0$ and we also assume that $\underset{t\to \infty}{\lim} X (t)=\infty$ a.s. Sufficient conditions of unboundedness of $X(t)$ is considered in Section 5.
    
    Introduce next assumtions
    \begin{itemize}
     \item[\textbf{I.}] Functions $g(\cdot)\in \mathbf{C}^{+}$, $\sigma(\cdot)\in\mathbf{C}^{+}$, $\phi(\cdot)\in\mathbf{C}^{+}$, and $\theta(\cdot)\in\mathbf{C}$, $c(\cdot)\in \mathbf{C}$, $\gamma(\cdot)\in\mathbf{C}$ are such that:
      \begin{itemize}
        \item[(i)] (Lipschitz condition) for some constants $L_i >0$, $i=1,2,3,$ and all $t\in[0, T]$ and $x,y\in\mathbb{R}$
         \[
           |g(x) - g(y)| \leq L_1|x - y|,\,\,\, |\sigma(x) - \sigma(y)| \leq L_2|x - y|,
         \]
         \[
           |c(x) - c(y)| \leq L_3|x - y|;
         \]
        \item[(ii)] $\displaystyle\int_{\mathbb{R}}\gamma^2(u)\nu(du)<\infty; $
        
        \item[(iii)] $c(\cdot)$ is bounded. 
      \end{itemize}
    \end{itemize}
    \begin{remark}
     Note that, if functions $g$, $\sigma$ and $c$ are continuous and satisfy Lipschitz condition, then they will also satisfy growth condition:
     for some constant $K_i>0,\ i=1,2,3$, and all $t\in[0,T]$ and $x\in\mathbb{R}$
         \begin{equation}\label{GrowthCndn}
           |g(x)|\leq K_1 (1+|x|),\ |\sigma(x)|\leq K_2 (1+|x|),\ |c(x)|\leq K_3 (1+|x|).
         \end{equation}
     It means that if condition \textbf{I} is fulfilled, then the solution of the equation \eqref{SDEwJmps} exists and unique (see, for example \cite{App}).
    \end{remark}
    \begin{remark}
    From \textbf{I.(ii)} and \textbf{I.(iii)} it follows that 
     \[
       \displaystyle\mathsf{E}\left\{ \int_0^t\int_{|u|<\mathcal{K}}|c(X(r-))\gamma(u)|\nu(du)dr\right\}<\infty.
     \]
    Then 
    \begin{align*}
    &\int_0^t\int_{|u|<\mathcal{K}}c(X(t-))\gamma(u)\tilde{N}(dt, du)=\\
    &=\int_0^t\int_{|u|<\mathcal{K}}c(X(t-))\gamma(u)N(dt, du)-\int_0^t\int_{|u|<\mathcal{K}}c(X(r-))\gamma(u)\nu(du)dr   
    \end{align*}   
    \end{remark}
    
    Introduce also the following  assumptions:
    \begin{itemize}
        \item[\textbf{II.}] Function $G(x)=\int\limits_b^x  \frac{ds}{g(s)},\  x\ge b$ is such that
         \[
           \underset{x\to\infty}\lim  G(x)=\infty;
         \]
        \item[\textbf{III.}] Function $\Phi (t)=\int\limits_0^t \phi (u) du>0,\ t>0,$ and 
          \[
            \underset{t\to\infty}\lim \frac{t}{\Phi(t)}  =0.
          \]
    \end{itemize}
    \begin{itemize}
        \item[\textbf{IV.}] Function $\Theta(t)=\int\limits_{0 }^{t}\theta^2 (s)ds,\ t\geq0$ is such that
          \[
            \sum\limits_{k=0}^\infty \frac{\Theta(2^{k+1})}{\Phi^2 (2^k )}<\infty.
          \]
    \end{itemize}
    \begin{remark}
        For the assumption \textbf{IV}, we can provide some simple sufficient conditions  in terms of power functions. It is easy to see, that if $$ \underset{t\to\infty}{\lim\sup\,} \frac{\Theta(t)}{t^{\beta}}>0\,\,\,\textit{and} \,\,\,  \underset{t\to\infty}{\lim\inf\,} \frac{\Phi(t)}{t^{\alpha}}>0$$ are satisfied with $\beta<2\alpha$, then \textbf{IV} holds.
    \end{remark}
    \begin{example}
    It is possible to represent the sufficient
    condition in terms of the regularly varying  functions.
    
    Fix $d\geq 0$ and let $f : [d,\infty) \to (0,\infty)$ be a measurable function. We say
    that $f$ is regularly varying of degree $\alpha \in\mathbb{R}$  if
    $$\underset{x\to \infty}{\lim}\frac{f(cx)}{f(x)}= c^{\alpha},$$
    for all $c > 0$. We will denote the class of regularly varying functions of degree $\alpha$ 
    on $\mathbb{R}^+$ by $\mathscr{RV}_{\alpha}$. Elements of the class $\mathscr{RV}_0$ are said to be \emph{slowly varying}. Examples
    of functions in $\mathscr{RV}_0$ are $log(1+x)$ and $ log( log(e+x))$. Clearly $f(\cdot) \in \mathscr{RV}_{\alpha}$
    if and only if there exists $l(\cdot) \in \mathscr{RV}_0$ such that
    $f(x) = l(x)x^{\alpha}$,
    for all $x \in \mathbb{R}^+$. The following representation theorem for slowly varying
    functions can be found in Bingham et al. (\cite {Bin})
    \begin{remark}
      Assume that $\phi(\cdot)\in\mathscr{RV}_{\alpha}$ and $\theta(\cdot)\in\mathscr{RV}_{\beta}$.
    According to Karamata's Theorem
    $$
    \Phi(\cdot)\in\mathscr{RV}_{\alpha+1}, \qquad \Theta(\cdot)\in\mathscr{RV}_{2\beta+1}.
    $$
    Therefore condition~\textbf{IV} holds if $\alpha\geq\beta$.  
    \end{remark}
        
    \end{example}


    
    \section{The It\^o formula}
    \begin{theorem}\label{Ito}
     Suppose \begin{align}
     X(t)=X_0+\int_{0}^t g(X(r))\phi(r)dr+&\int_{0}^t\sigma(X(r))\theta(r)dW(r)+\nonumber\\
    &+\int_{0}^t\int_{|u|<\mathcal{K}} c(X(r-))\gamma(u)\tilde{N}(dr,du).
    \end{align} 
    Then  we have
    \begin{align*}
    &G(X(t))-G(X_0)=\\
    &=\int_{0}^t\left(\phi(r)-\frac{1}{2}\frac{g'(X(r))\sigma^2(X(r))}{g^2(X(r))}\cdot\theta^2(r)\right)dr+\int_{0}^t\frac{\sigma(X(r))}{g(X(r))}\cdot \theta(r)dW(r)+\\
    &+\int_{0}^t\int_{|u|<\mathcal{K}}\left\{G(X(r-)+c(X(r-))\gamma(u))-G(X(r-))-\frac{c(X(r-))}{g(X(r-))}\cdot \gamma(u)\right\}\nu(du)dr+\\
    &+\int_{0}^t\int_{|u|<\mathcal{K}}\left(G\left(X(r-)+c(X(r-))\gamma(u)\right)-G(X(r-))\right)\tilde{N}(dr,du).
    \end{align*} 
    
     \end{theorem}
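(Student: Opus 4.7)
The plan is to apply the general Itô formula for semimartingales with jumps to the composition $G(X(t))$, exploiting the simplification $G'(x)g(x)=1$ built into the definition $G(x)=\int_b^x ds/g(s)$. Assuming $g\in \mathbf{C}^{1,+}$, one has $G\in \mathbf{C}^2$ with $G'(x)=1/g(x)$ and $G''(x)=-g'(x)/g^2(x)$. The semimartingale $X$ decomposes into a continuous part $X^c$ with $dX^c(r)=g(X(r))\phi(r)\,dr+\sigma(X(r))\theta(r)\,dW(r)$ and continuous quadratic variation $d[X^c,X^c](r)=\sigma^2(X(r))\theta^2(r)\,dr$, plus a pure-jump part whose jumps are $\Delta X(r)=c(X(r-))\gamma(u)$ at the atoms of $N$.

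Writing the Itô formula for $G(X(t))$ as
\[
G(X(t))-G(X_0)=\int_0^t G'(X(r))\,dX^c(r)+\tfrac{1}{2}\int_0^t G''(X(r))\,d[X^c,X^c](r)+\sum_{r\leq t}\bigl[G(X(r))-G(X(r-))\bigr],
\]
substituting the formulas for $G'$ and $G''$ collapses the Lebesgue drift to $\int_0^t\phi(r)\,dr$, the stochastic integral to $\int_0^t(\sigma(X(r))/g(X(r)))\theta(r)\,dW(r)$, and the Itô correction to $-\tfrac{1}{2}\int_0^t (g'(X(r))\sigma^2(X(r))/g^2(X(r)))\theta^2(r)\,dr$. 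The jump sum is then expressed as an integral against $N$ over $|u|<\mathcal K$ of $G(X(r-)+c(X(r-))\gamma(u))-G(X(r-))$.

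To obtain the stated form, I would add and subtract the linear term $G'(X(r-))c(X(r-))\gamma(u)=c(X(r-))\gamma(u)/g(X(r-))$ inside the $N$--integrand and use $N=\tilde N+dr\,\nu$. Regrouping produces precisely the two final integrals in the statement: the Lebesgue compensator integral involving $G(X(r-)+c\gamma)-G(X(r-))-(c/g)\gamma$ integrated against $\nu(du)\,dr$, and the compensated integral of $G(X(r-)+c\gamma)-G(X(r-))$ against $\tilde N$.

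The main technical obstacle is verifying integrability so that the rewritten jump integrals are well defined and the split into a $\nu$-integral and a $\tilde N$-integral is legitimate. A second-order Taylor expansion of $G$ yields
\[
\bigl|G(x+c(x)\gamma(u))-G(x)-\tfrac{c(x)}{g(x)}\gamma(u)\bigr|\leq \tfrac{1}{2}\sup_{\xi\in I}|G''(\xi)|\,\|c\|_\infty^2\,\gamma^2(u),
\]
where $I$ is a compact interval controlled by $x$ and $\|c\|_\infty$. Combined with Assumptions \textbf{I.(ii)--(iii)} this gives absolute integrability for the compensator term and square integrability for the $\tilde N$ integrand; since $X$ is globally unbounded, these bounds are applied on each set $\{|X(r-)|\leq n\}$ and extended by the standard localization through stopping times $\tau_n=\inf\{t:|X(t)|\geq n\}$, letting $n\to\infty$ at the end. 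Once this is in place the argument is purely algebraic, and the claimed identity follows.
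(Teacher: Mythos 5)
Your proposal is correct and is essentially the route the paper itself takes: Theorem 3.1 is stated without a written proof (the proof environment that follows belongs to the Dynkin formula and merely cites the It\^o formula, Proposition 5.1 in Applebaum \cite{App}), and what you spell out is exactly the standard specialization of the L\'evy--It\^o formula to $G$ using $G'(x)=1/g(x)$ and $G''(x)=-g'(x)/g^2(x)$. The one point to tighten is your starting form with the raw jump sum $\sum_{r\le t}\bigl[G(X(r))-G(X(r-))\bigr]$ and the drift integral against $dX^c$: for an infinite-activity $\nu$ these pieces need not converge separately, so one should begin from the version in which the linear correction $-G'(X(r-))\Delta X(r)$ already sits inside the jump sum (equivalently, the $\tilde N$/$\nu$ split you end with); since your second-order Taylor bound controls exactly that compensated summand by a constant times $\gamma^2(u)$, which is $\nu$-integrable by \textbf{I.(ii)} together with the boundedness of $c$ and $1/g$, this is a presentational rather than a substantive issue.
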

    
    \begin{theorem}[Dynkin formula]
     
    \begin{align*}
    &\mathsf{E}[G(X(t))]=\\
    &=G(X_0)+\Phi(t)-\frac{1}{2}\mathsf{E}\left[\int_{0}^t\frac{g'(X(r))\sigma^2(X(r))}{g(X(r))}\cdot\theta^2(r)dr\right]+\\
    &+\mathsf{E}\left[\int_{0}^t\int_{|u|<\mathcal{K}}\left\{G(X(r-)+c(X(r-))\gamma(u))-G(X(r-))-\frac{c(X(r-))\gamma(u)}{g(X(r-))} \right\}\nu(du)dr\right].
    \end{align*} 
    
     \end{theorem}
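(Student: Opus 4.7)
The plan is to apply the It\^o formula of Theorem~\ref{Ito} to $G(X(t))$ and then to take expectations on both sides. Three of the four terms on the right--hand side -- the drift $\int_0^t \phi(r)\,dr$, the quadratic correction $-\tfrac{1}{2}\int_0^t \tfrac{g'(X(r))\sigma^2(X(r))}{g^2(X(r))}\theta^2(r)\,dr$, and the absolutely continuous jump compensation $\int_0^t\!\int_{|u|<\mathcal{K}}\{G(X(r-)+c(X(r-))\gamma(u))-G(X(r-))-\tfrac{c(X(r-))\gamma(u)}{g(X(r-))}\}\,\nu(du)\,dr$ -- are pathwise of finite variation, so by Fubini their expectations may be taken under the integrals, and the first one equals $\Phi(t)$ by the definition in \textbf{III}.

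\medskip
\noindent It therefore remains to show that the two stochastic integrals
\[
M^W_t:=\int_0^t \frac{\sigma(X(r))}{g(X(r))}\theta(r)\,dW(r),\qquad M^{\tilde N}_t:=\int_0^t\!\int_{|u|<\mathcal{K}}\bigl(G(X(r-)+c(X(r-))\gamma(u))-G(X(r-))\bigr)\tilde{N}(dr,du)
\]
are true $\mathfrak{F}_t$--martingales, so that $\mathsf{E}[M^W_t]=\mathsf{E}[M^{\tilde N}_t]=0$. For $M^W$ the standard sufficient condition is $\mathsf{E}\int_0^t (\sigma/g)^2(X(r))\theta^2(r)\,dr<\infty$; for $M^{\tilde N}$ the It\^o isometry for compensated Poisson integrals reduces the claim to $\mathsf{E}\int_0^t\!\int_{|u|<\mathcal{K}}(G(X(r-)+c(X(r-))\gamma(u))-G(X(r-)))^2\nu(du)\,dr<\infty$. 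The mean value theorem applied to $G$ (with $G'=1/g$), together with the boundedness of $c$ from \textbf{I.(iii)} and $\int\gamma^2\,d\nu<\infty$ from \textbf{I.(ii)}, controls the latter integrand as long as $1/g$ is bounded along the trajectory of $X$.

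\medskip
\noindent The main obstacle is exactly that no global upper bound on $1/g$ is part of hypothesis~\textbf{I}, so a direct $L^2$ estimate is not available. I would handle this by the usual localization argument: set $\tau_n:=\inf\{t\ge 0:X(t)\notin[b,n]\}\wedge n$, which increases to $\infty$ a.s.\ by the standing hypothesis $X(t)\to\infty$ combined with continuity and positivity of $g$ on $[b,\infty)$. On $\{t\le\tau_n\}$ both $1/g(X)$ and the increment of $G$ are uniformly bounded, so the stopped stochastic integrals $M^W_{\cdot\wedge\tau_n}$, $M^{\tilde N}_{\cdot\wedge\tau_n}$ are genuine zero--mean martingales and Dynkin's identity holds up to time $t\wedge\tau_n$. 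Passing to the limit $n\to\infty$ by dominated convergence, using the Lipschitz--induced growth bounds~\eqref{GrowthCndn} and the moment estimates for L\'{e}vy--type integrals referred to in the abstract, then produces the identity in the stated form.
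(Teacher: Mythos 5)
Your proposal follows essentially the same route as the paper, which simply applies the It\^o formula of Theorem~\ref{Ito} and invokes Proposition 5.1 of Applebaum to conclude that the two stochastic integrals are zero-mean martingales; your localization argument merely makes explicit the integrability details that the paper's citation absorbs. (Incidentally, your quadratic-correction term carries the denominator $g^2(X(r))$ from Theorem~\ref{Ito}, whereas the stated Dynkin formula has $g(X(r))$ --- the latter appears to be a typo in the statement rather than an issue with your argument.)
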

    \begin{proof}
    The proof follows from the It\^o formula, Proposition 5.1 in D. Applebaum \cite{App}.
    \end{proof}
    
    \section{Almost-sure asymptotic equivalence solution of SDEs driven by L\'{e}vy  noise and solution ODE}
    
    Additionally assume that 
    \begin{itemize}
        \item[\textbf{V.}] $\frac{\sigma}{g }$ is bounded function;
        \item[\textbf{VI.}] $\underset{x\in\mathbb{R}}\inf\,g(x)=k_g>0$ and $\underset{x\to\infty}\lim  g'(x)=0$;
        \item[\textbf{VII.}] $\underset{t \to\infty}{\lim\sup}\, \frac{\theta^2 (t)}{\varphi (t)} <\infty$.
    \end{itemize}
    
    Now we can formulate the main results of this section.
    \begin{theorem}\label{th:AsEq_by_NRF} 
     If conditions \textbf{I -- VII} hold, then
      \[
        \underset{t\to \infty}{\lim}\frac {G(X(t))}{\Phi(t)}=1\ \ \ \textnormal{a.s.}
      \]
    \end{theorem}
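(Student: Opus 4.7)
The plan is to apply the It\^o formula of Theorem~\ref{Ito} to $G(X(t))$, divide by $\Phi(t)$, and show that every term on the right-hand side except $\int_0^t\phi(r)\,dr=\Phi(t)$ is $o(\Phi(t))$ almost surely. Since $G(X_0)$ is a constant and $\Phi(t)\to\infty$ by \textbf{III}, the constant term is harmless, which leaves four quantities to control: a finite-variation drift correction, a Brownian stochastic integral $M_1(t)$, a $\nu\otimes dr$ compensator coming from the jumps, and a compensated Poisson martingale $M_2(t)$.

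For the drift correction $\tfrac12\int_0^t g'(X(r))\sigma^2(X(r))/g^2(X(r))\cdot\theta^2(r)\,dr$ I would combine \textbf{V} (boundedness of $\sigma/g$) with \textbf{VI} and the a.s.\ divergence $X(r)\to\infty$, so that the $X$-dependent factor tends to $0$ a.s.; together with \textbf{VII}, which forces $\theta^2(r)\le C\phi(r)$ eventually, a Stolz--Ces\`aro argument then gives that the whole integral is $o(\Phi(t))$ a.s. For the Brownian integral, \textbf{V} gives $\langle M_1\rangle(t)\le C\,\Theta(t)$, so Doob's $L^2$-maximal inequality on dyadic blocks produces $\mathsf{P}(\sup_{s\le 2^{k+1}}|M_1(s)|>\varepsilon\Phi(2^k))\le C\Theta(2^{k+1})/(\varepsilon^2\Phi^2(2^k))$; the summability of this is precisely \textbf{IV}, so Borel--Cantelli together with the monotonicity $\Phi(2^k)\le\Phi(t)$ for $t\in[2^k,2^{k+1}]$ (ensured by $\phi>0$) gives $M_1(t)/\Phi(t)\to0$ a.s.

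The Poisson compensator is handled by a second-order Taylor expansion: using $G'=1/g$ and $G''=-g'/g^2$, the integrand equals $-\tfrac{g'(\xi)}{2g^2(\xi)}c(X(r-))^2\gamma(u)^2$ with $\xi$ between $X(r-)$ and $X(r-)+c(X(r-))\gamma(u)$. By \textbf{I.(iii)} and continuity of $\gamma$ on the bounded region $|u|<\mathcal{K}$, the perturbation $c(X(r-))\gamma(u)$ is bounded uniformly in $u$, so $\xi\to\infty$ a.s.\ uniformly in $u$, and \textbf{VI} yields $|g'(\xi)|\to 0$; combined with $g\ge k_g$ and $\int\gamma^2\,d\nu<\infty$ (\textbf{I.(ii)}), the integrand is bounded by $\varepsilon(r)\gamma(u)^2$ with $\varepsilon(r)\to 0$, so the full double integral is $o(t)=o(\Phi(t))$ by \textbf{III}. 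For $M_2$ the mean value theorem gives $|G(x+h)-G(x)|\le|h|/k_g$ and hence $\langle M_2\rangle(t)\le Ct$, and the same dyadic Doob+Borel--Cantelli argument, which requires $\sum_k 2^k/\Phi^2(2^k)<\infty$ (a consequence of the growth of $\Phi$ built into \textbf{III}--\textbf{IV}), gives $M_2(t)/\Phi(t)\to 0$ a.s.

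The main obstacle I foresee is the two martingale pieces: the dyadic scale must be chosen so the quadratic-variation bounds match the summability built into \textbf{IV} exactly, and the a.s.\ estimate then has to be transferred from the grid $\{2^k\}$ to arbitrary $t$. The Taylor-remainder bookkeeping in the compensator piece is routine but hinges on the observation that $\xi$ is only a bounded perturbation of $X(r-)$, so that the decay of $g'$ from \textbf{VI} can be applied uniformly in the jump size $u$.
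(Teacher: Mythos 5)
Your proposal follows essentially the same route as the paper: the same It\^o-formula decomposition into a drift term, a Brownian martingale, a jump compensator, and a compensated Poisson martingale, with the drift correction killed by \textbf{V}--\textbf{VII}, the two martingales controlled by Doob's maximal inequality on dyadic blocks plus Borel--Cantelli (using \textbf{IV} for the Brownian part and the bound $\sum_k 2^k/\Phi^2(2^k)<\infty$, which indeed follows from \textbf{III}, for the jump part), and the compensator bounded by a second-order Taylor expansion of $G$ and condition \textbf{III}. The only cosmetic difference is that you let $g'(\xi)\to 0$ in the compensator estimate where the paper simply bounds $|g'|$ by its supremum; both work.
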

    \begin{theorem}\label{th:AsEq_by_SODE}
     Let $g(\cdot)\in \mathbf{C}^{1,+}$, $\sigma(\cdot)\in\mathbf{C}^{1,+}$, $\phi(\cdot)\in\mathbf{C}^{+}$, $\theta(\cdot)\in\mathbf{C}^{+}$, $c(\cdot)\in\mathbf{C}^{+}$, $\gamma(\cdot)\in\mathbf{C}^{+}$ be such that equation \eqref{SDEwJmps}
    has a c\'{a}dl\'{a}g solution $X(\cdot)$ and $\underset{t\to \infty}{\lim} X(t)=\infty$ a.s. If conditions \textbf{I -- VII} hold and
    \begin{itemize}
        \item[\textbf{VIII.}] $\displaystyle\underset{t\to\infty}{\lim \inf} \int_t^{ct}\frac {du}{g(u)G(u)}>0,\,\, \textit{ for all} \,\, c>0$,
    \end{itemize}
      then 
     \[\underset{t\to \infty}{\lim}\frac{X(t)}{\mu(t)}=1 \,\, \text{a.s.,}\]
     where $\mu(\cdot)$ is a solution 
     \begin{equation}\label{ode}
    d\mu(t)=g(\mu(t))\phi(t)dt.     
     \end{equation}
    \end{theorem}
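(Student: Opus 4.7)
The plan is to combine Theorem~\ref{th:AsEq_by_NRF} with an explicit solution of the ODE \eqref{ode} and then transfer the asymptotic equivalence of the $G$-values back to the arguments using condition \textbf{VIII}.

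First I would solve \eqref{ode} by separation of variables. Since $g,\phi>0$, dividing by $g(\mu(t))$ and integrating yields
\[
G(\mu(t))=G(\mu(0))+\Phi(t).
\]
Condition \textbf{III} forces $\Phi(t)\to\infty$ ($\Phi$ is increasing and $t/\Phi(t)\to 0$ rules out boundedness), and then condition \textbf{II} implies $\mu(t)\to\infty$ together with $G(\mu(t))/\Phi(t)\to 1$. Combining this with Theorem~\ref{th:AsEq_by_NRF}, which gives $G(X(t))/\Phi(t)\to 1$ almost surely, dividing the two asymptotics produces
\[
\frac{G(X(t))}{G(\mu(t))}\longrightarrow 1\qquad\text{a.s.}
\]

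The remaining step, which I expect to be the main obstacle, is to upgrade this limit to $X(t)/\mu(t)\to 1$. Here condition \textbf{VIII} is crucial. Since $G'(u)=1/g(u)$, the integral equals $\int_t^{ct}\frac{du}{g(u)G(u)}=\log\frac{G(ct)}{G(t)}$, so \textbf{VIII} (for $c>1$) amounts to saying: for each $c>1$ there is $\delta(c)>0$ with $G(ct)/G(t)\geq 1+\delta(c)$ for all $t$ sufficiently large. I would then argue by contradiction. Suppose, on a set of positive probability, $X(t)/\mu(t)\not\to 1$; then on some random sequence $t_n\to\infty$ and for some fixed $\varepsilon>0$ either $X(t_n)\geq(1+\varepsilon)\mu(t_n)$ for all $n$, or $X(t_n)\leq(1-\varepsilon)\mu(t_n)$ for all $n$. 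In the first case, monotonicity of $G$ together with the non-degeneracy estimate applied at the point $\mu(t_n)\to\infty$ gives $G(X(t_n))/G(\mu(t_n))\geq 1+\delta(1+\varepsilon)$, contradicting the previous display. The second case is symmetric: setting $c=1/(1-\varepsilon)>1$ and using $X(t_n)\to\infty$ (by hypothesis), one obtains $G(\mu(t_n))/G(X(t_n))\geq 1+\delta(c)$, hence $G(X(t_n))/G(\mu(t_n))\leq 1/(1+\delta(c))<1$, again a contradiction.

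The main technical subtlety is that the lower bound supplied by \textbf{VIII} is deterministic and uniform for all large $t$, while the extracted subsequence $\{t_n\}$ is random; this is harmless precisely because the bound holds simultaneously for every $t$ sufficiently large, and both $\mu(t_n)$ and $X(t_n)$ diverge to $\infty$ almost surely. With these observations the two cases dispose of the complement of the event $\{X(t)/\mu(t)\to 1\}$, completing the proof.
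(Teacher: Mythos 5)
Your proposal is correct, and at the top level it follows the same skeleton as the paper: combine Theorem~\ref{th:AsEq_by_NRF} with the identity $G(\mu(t))=G(\mu(0))+\Phi(t)$ coming from separation of variables, and then use condition \textbf{VIII} to transfer the asymptotic equivalence of the $G$-values back through $G^{-1}$. The difference is in how that last transfer is justified. The paper disposes of it in one line by citing Theorem 2.2.8 of Buldygin, Indlekofer, Klesov and Steinebach, which states that under \textbf{II} and \textbf{VIII} the inverse $G^{-1}$ preserves asymptotic equivalence; you instead prove this step from scratch. Your key observation, that $G'(u)=1/g(u)$ turns the integral in \textbf{VIII} into $\ln\bigl(G(ct)/G(t)\bigr)$, so that \textbf{VIII} is exactly the statement $G(ct)\geq(1+\delta(c))\,G(t)$ for large $t$ and each $c>1$, is precisely the mechanism behind the cited theorem, and your two-case contradiction argument (using monotonicity of $G$, the divergence of $\mu(t)$ forced by \textbf{II} and \textbf{III}, and the hypothesis $X(t)\to\infty$ a.s.\ in the second case) is sound; the pathwise nature of the argument makes the randomness of the subsequence harmless, as you note. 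What your version buys is a self-contained proof and an explicit handling of the initial-condition constant $G(\mu(0))$, which the paper absorbs silently into $\Phi(t)=G(\mu(t))$. One small point worth flagging: as literally written, \textbf{VIII} cannot hold ``for all $c>0$'' since for $0<c<1$ the integral $\int_t^{ct}$ is negative; you implicitly (and correctly) use only $c>1$, which is all the argument requires, so this is a defect of the paper's formulation rather than of your proof.
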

    
    \subsection{Auxiliary assertions}
    
    $\indent$To prove the theorems let us consider some auxiliary statements.
    
    By It\^o formula from Theorem \ref{Ito},  we have  
     \[
      \begin{aligned}
       G&(X(t))=G(X_0)+\int_{0}^t\left(\phi(r)-\frac{1}{2}\frac{g'(X(r))\sigma^2(X(r))}{g^2(X(r))}\cdot\theta^2(r)\right)dr+\\
       &+\int_{0}^t\frac{\sigma(X(r))}{g(X(r))}\cdot \theta(r)dW(r)+\\
       &+\int_{0}^t\int_{|u|<\mathcal{K}}\left\{G(X(r-)+c(X(r-))\gamma(u))-G(X(r-))-\frac{c(X(r-))\gamma(u)}{g(X(r-))} \right\}\nu(du)dr+\\
       &+\int_{0}^t\int_{|u|<\mathcal{K}}\left(G\left(X(r-)+c(X(r-))\gamma(u)\right)-G(X(r-))\right)\tilde{N}(dr,du)=
      \end{aligned} 
     \]
     \begin{equation}\label{ItoRep4GXAA}
        =G(X_0)+\mathfrak{I}_1(t)+\mathfrak{I}_2(t)+\mathfrak{I}_3(t)+\mathfrak{I}_4(t).\hspace*{60mm} 
     \end{equation}
    \begin{lemma}\label{lem:I3/Phi_20}    
     Assume that functions  $g\in \mathbf{C}^{+}$,  $\phi(\cdot)\in\mathbf{C}^{+}$,  $c(\cdot)\in \mathbf{C}$, $\gamma(\cdot)\in\mathbf{C}$ satisfy \textbf{I, III, VI}.
    Then
    $$  \underset{t\to\infty}{\lim\,}\frac{\mathfrak{I}_3(X(t))}{\Phi(t)}= 0 \,\,\, \textit{a.s.}  $$ 
    \end{lemma}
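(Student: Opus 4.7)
The plan is to control the integrand of $\mathfrak{I}_3$ using a second-order Taylor expansion of $G$ and then exploit the fact that $X(r-)\to\infty$ a.s.\ together with $g'(x)\to 0$ at infinity to obtain a bound of the form $|\mathfrak{I}_3(t)|\le C_1+C\varepsilon\,t$, which combined with assumption \textbf{III} ($t/\Phi(t)\to 0$) gives the claim.

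First I would observe that $G'(x)=1/g(x)$ and, when $g$ is $C^1$, $G''(x)=-g'(x)/g^2(x)$. Taylor's theorem with Lagrange remainder then yields
\[
 G(x+h)-G(x)-\frac{h}{g(x)}=-\frac{g'(\xi)}{2\,g^2(\xi)}\,h^2
\]
for some $\xi$ in the interval with endpoints $x$ and $x+h$. Applied with $x=X(r-)$ and $h=c(X(r-))\gamma(u)$, and using the uniform lower bound $g\ge k_g>0$ from \textbf{VI} together with the boundedness of $c$ from \textbf{I}(iii), say $|c|\le M$, this produces the pointwise estimate
\[
 |J(r,u)|\le \tfrac{1}{2}M^2\gamma^2(u)\,k_g^{-2}\,\sup_{\xi\in I(r,u)}|g'(\xi)|,
\]
where $I(r,u)$ is the interval between $X(r-)$ and $X(r-)+c(X(r-))\gamma(u)$. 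Since \textbf{VI} forces $g'$ to be bounded (it is continuous and tends to $0$ at infinity), call $K:=\sup_x|g'(x)|<\infty$, this already gives the crude bound $|J(r,u)|\le (KM^2/2k_g^2)\gamma^2(u)$.

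The key refinement is to upgrade the supremum inside the integral to something small for large times. I would fix $\varepsilon>0$ and choose $N$ so large that $|g'(x)|<\varepsilon$ for $x>N$, and $A$ so large that $\int_{|\gamma(u)|\ge A}\gamma^2(u)\,\nu(du)<\varepsilon$, which is possible by \textbf{I}(ii). On the event (of full probability) where $X(r-)\to\infty$ there exists a random time $T=T(\omega)$ such that $X(r-)>N+MA$ for all $r\ge T$. For such $r$, splitting the $u$-integral at $|\gamma(u)|=A$, the region $|\gamma(u)|<A$ forces $\xi\ge X(r-)-MA>N$ and hence $|g'(\xi)|<\varepsilon$, while on the complement we use the crude bound $K$. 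This gives
\[
 \int_{|u|<\mathcal{K}}|J(r,u)|\,\nu(du)\le \frac{M^2\varepsilon}{2k_g^2}\int\gamma^2\,d\nu+\frac{M^2K}{2k_g^2}\varepsilon=:C\varepsilon\qquad(r\ge T).
\]

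Integrating in $r$ yields $|\mathfrak{I}_3(t)|\le |\mathfrak{I}_3(T)|+C\varepsilon(t-T)$ for $t\ge T$, and hence
\[
 \limsup_{t\to\infty}\frac{|\mathfrak{I}_3(t)|}{\Phi(t)}\le C\varepsilon\,\limsup_{t\to\infty}\frac{t}{\Phi(t)}=0
\]
by \textbf{III}; since $\varepsilon$ is arbitrary, the lemma follows. The main obstacle in the argument is precisely the fact that the Taylor point $\xi$ need not be close to $X(r-)$ when $\gamma(u)$ is large, so one cannot simply replace $g'(\xi)$ by $g'(X(r-))$; handling this requires the splitting of the $u$-integral against the tail of $\nu\circ\gamma^{-1}$ together with the deterministic bound $g\ge k_g$, both of which are precisely what assumptions \textbf{I}(ii), \textbf{I}(iii) and \textbf{VI} supply.
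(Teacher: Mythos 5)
Your argument is correct and is essentially the paper's: a second-order Taylor expansion of $G$, the bound $|G''(\xi)|\le \sup_x|g'(x)|/k_g^2$ coming from \textbf{VI} and the Lipschitz condition (note that continuity of $g'$ together with $g'(x)\to 0$ as $x\to+\infty$ alone does not bound $g'$ on all of $\mathbb{R}$; it is \textbf{I}(i) that gives $|g'|\le L_1$), and \textbf{I}(ii)--(iii) yield $|\mathfrak{I}_3(t)|\le Ct$ a.s., after which \textbf{III} finishes. Your ``key refinement'' --- the splitting at $|\gamma(u)|=A$ and the appeal to $X(t)\to\infty$ --- is superfluous: since \textbf{III} asserts $t/\Phi(t)\to 0$ rather than mere boundedness, the crude linear bound already forces the limit to be $0$, as your own final display in effect concedes when the factor $\varepsilon$ multiplies a $\limsup$ that is already zero.
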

    \begin{proof}
    By Taylor's formula
    $$G(x+c(x)\gamma(u)) = G(x)+G'(x)c(x)\gamma(u)+R(x, x+c(x)\gamma(u)),$$
    where 
    $$R(x, x+c(x)\gamma(u))=\left(\int_0^1(1-\zeta)G''(x+\zeta \cdot c(x)\gamma(u))d\zeta\right)c^2(x)\gamma^2(u).$$
    Then
     \[
      \begin{aligned}
          G(x+c(x)\gamma(u)) -G(x)-&G'(x)c(x)\gamma(u)=\\
          &=c^2(x)\gamma^2(u)\int_0^1(1-\zeta)G''(x+\zeta \cdot c(x)\gamma(u))d\zeta,
      \end{aligned}
     \]
    and we can estimate the integral $\mathfrak{I}_3$:
    \begin{align}
     &|\mathfrak{I}_3(t)|\leq \nonumber\\
      &\leq\frac{1}{2}\int_0^t\int_{|u|<\mathcal{K}}\left(\underset{0\leq\zeta\leq1}{\sup\,}\Big|G''(X(r-)+\zeta \cdot c(X(r-))\gamma(u))\Big|\right)c^2(X(r-))\gamma^2(u)\nu(du)dr\leq \nonumber\\
      & \leq \frac{\underset{x\in\mathbb{R}}\sup\,|g'(x)|}{2k_g^2}\cdot\int_0^t\int_{|u|<\mathcal{K}}c^2(X(r-))\gamma^2(u)\nu(du)dr\le K_{\mathfrak{I}_3}t,\ \text{a.s .}
    \end{align}
    where $K_{\mathfrak{I}_3}=\dfrac1{2k_g^2}\cdot\underset{x\in\mathbb{R}}\sup\,|g'(x)| \cdot\underset{x\in\mathbb{R}}\sup\,c^2(x)\cdot\int_{\mathbb{R}}\gamma^2(u)\nu(du).$

    Since \textbf{III} holds, then $$  \underset{t\to\infty}{\lim\,}\frac{\mathfrak{I}_3(X(t))}{\Phi(t)}= 0 \,\,\, \textit{a.s.}  $$ 
    \end{proof}

    \begin{lemma}\label{L2}
      Assume that $g(\cdot)\in \mathbf{C}^{+}$,  $\phi(\cdot)\in\mathbf{C}^{+}$, and  $c(\cdot)\in \mathbf{C}$, $\gamma(\cdot)\in\mathbf{C}$ satisfy \textbf{I.(iii)} and \textbf{III}. Then 
       \[
         \underset{t\to\infty}{\lim\,}\frac{\mathfrak{I}_4(X(t))}{\Phi(t)}= 0 \,\,\, \textit{a.s.} 
       \]
    \end{lemma}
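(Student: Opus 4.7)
The integrand of $\mathfrak{I}_4$ is integrated against the compensated Poisson random measure $\widetilde{N}$, so my plan is to view $\mathfrak{I}_4$ as a purely discontinuous square-integrable martingale and to conclude via Doob--Chebyshev combined with Borel--Cantelli along the dyadic sequence $t_k = 2^k$. This mirrors the standard recipe one would apply to the Brownian integral $\mathfrak{I}_2$: produce a deterministic linear bound on the angle bracket of $\mathfrak{I}_4$, then use the growth of $\Phi$ supplied by condition~\textbf{III} to absorb the fluctuations.

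The crucial step is a uniform control on the jumps $G(x+c(x)\gamma(u))-G(x)$. I would apply the Mean Value Theorem to $G$, using $G'(y)=1/g(y)$, to write
\[
|G(x+c(x)\gamma(u))-G(x)|\le \frac{|c(x)|\,|\gamma(u)|}{k_g},
\]
where $k_g=\inf_x g(x)>0$; this invokes condition~\textbf{VI} exactly as in Lemma~\ref{lem:I3/Phi_20} (the hypothesis list of Lemma~\ref{L2} mentions only \textbf{I.(iii)} and \textbf{III}, but a positive lower bound on $g$ seems genuinely needed here). Combined with \textbf{I.(iii)} (boundedness of $c$) and \textbf{I.(ii)} ($\int\gamma^2\,d\nu<\infty$) this gives the a.s.\ deterministic bound
\[
\langle\mathfrak{I}_4\rangle_t = \int_0^t\!\!\int_{|u|<\mathcal{K}}\bigl(G(X(r-)+c(X(r-))\gamma(u))-G(X(r-))\bigr)^{2}\nu(du)\,dr \le C_0\, t,
\]
so the It\^o isometry yields $\mathsf{E}[\mathfrak{I}_4(t)^2]\le C_0 t$ and Doob's $L^2$ maximal inequality gives $\mathsf{E}\bigl[\sup_{s\le T}\mathfrak{I}_4(s)^2\bigr]\le 4 C_0 T$.

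Finally, using monotonicity of $\Phi$ together with Chebyshev, for every $\varepsilon>0$,
\[
\mathsf{P}\!\left(\sup_{t\in[2^k,2^{k+1}]}\frac{|\mathfrak{I}_4(t)|}{\Phi(t)}>\varepsilon\right) \le \frac{8 C_0\, 2^{k}}{\varepsilon^{2}\,\Phi(2^k)^2}.
\]
Condition~\textbf{III} forces $\Phi(t)/t\to\infty$, hence $\Phi(2^k)\ge 2^k$ for all sufficiently large $k$, so $2^k/\Phi(2^k)^2\le 2^{-k}$ eventually and the series is summable. The Borel--Cantelli lemma then yields $\limsup_{t\to\infty}|\mathfrak{I}_4(t)|/\Phi(t)\le\varepsilon$ a.s.\ for every $\varepsilon>0$, which is the claim. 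The only real obstacle is the uniform jump bound in the second step; once it is in hand the remainder is a routine martingale--SLLN-type argument and presents no surprises.
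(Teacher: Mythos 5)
Your proof is correct and follows essentially the same route as the paper's: both treat $\mathfrak{I}_4$ as a square-integrable martingale, bound the jump sizes by $|c(x)\gamma(u)|/k_g$ using $G'=1/g$, deduce a linear-in-$t$ bound on the second moment, and conclude via Doob's maximal inequality over dyadic blocks together with the summability of $2^{k}/\Phi^2(2^k)$ forced by condition \textbf{III}. Your remark that the stated hypotheses are incomplete is also accurate: the paper's own proof silently invokes $k_g>0$ from \textbf{VI} and $\int\gamma^2\,\nu(du)<\infty$ from \textbf{I.(ii)}, neither of which appears in the lemma's hypothesis list.
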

    \begin{proof}
     The stochastic integral $\mathfrak{I}_4(\cdot)$ is a square integrable martingale (see \cite{kunita}). 
     
     Using the representation
      \[
        G(x+c(x)\gamma(u))- G(x)=c(x)\gamma(u)\int_0^1(1-\zeta)G^{\prime}(x+\zeta \cdot c(x)\gamma(u))d\zeta
      \]
     and by Doob's martingale inequality 
     $$\mathsf{P}\left(
     \underset{k\geq n}{\sup }\frac{|\mathfrak{I}_4(2^k)|}{\Phi(2^k)} >\epsilon\right)\leq\mathsf{P}\left(
     \underset{t\geq 2^n}{\sup } \frac{|\mathfrak{I}_4(t)|}{\Phi(t)}>\epsilon\right)\leq$$
     $$\leq\frac{4}{\epsilon^2}\sum_{k=n}^{\infty} \left(
    \frac{1}{\Phi^2(2^k)}\int_{0}^{2^{k+1}}\int_{|u|<\mathcal{K}}\mathsf{E}\left(\frac{c^2(X(r-))\gamma^2(u)}{\underset{0\leq\zeta\leq1}{\inf\,}(g^2(X(r-)+\zeta \cdot c(X(r-))\gamma(u)))}\right)\nu(du)dr\right)\leq$$
     $$\leq\frac{4}{\epsilon^2k_g^2}\cdot\underset{x\in\mathbb{R}}\sup\,c(x)\cdot\int_{|u|<\mathcal{K}}\gamma^2(u)\nu(du)\cdot\sum_{k=n}^{\infty}\frac{2^{k+1}}{\Phi^2(2^k)}<\infty.$$
    Since $n\to \infty$ the left-side  last inequality tends to $0$. 
    
    Then from
    $\mathsf{P}\left\{\underset{t\to \infty}{\lim \sup }\frac{|\mathfrak{I}_4(t)|}{\Phi(t)}>0\right\}=0$ it follows that $\mathsf{P}\left\{\underset{t\to \infty}{\lim \sup }\frac{|\mathfrak{I}_4(t)|}{\Phi(t)}=0\right\}=1$, which in fact means  $$\mathsf{P}\left\{\underset{t\to \infty}{\lim }\frac{\mathfrak{I}_4(t)}{\Phi(t)}=0\right\}=1.$$
    
    \end{proof}
    
    
    \subsection{The Proofs of Theorems 4.1 and 4.2 }
    
    \textbf{Proof of Theorem 4.1}\ \ \ 
    \begin{proof}
    From \eqref{ItoRep4GXAA} we get
     \[
       \frac{G(X(t))}{\Phi(t)}=\frac{G(X_0)}{\Phi(t)}+\frac{\mathfrak{I}_1(t)}{\Phi(t)}+\frac{ \mathfrak{I}_2(t)}{\Phi(t)}+\frac{ \mathfrak{I}_3(t)}{\Phi(t)}+\frac{ \mathfrak{I}_4(t)}{\Phi(t)}.
     \]
    
    Since
    $$\underset{t\to \infty}{\lim}\frac{1}{\Phi(t)}\left|\int_{0}^t\frac{g'(X(r))\sigma^2(X(r))}{g^2(X(r))}\cdot\theta^2(r)dr\right|\leq\underset{t\to \infty}{\lim}\frac{L}{\Phi(t)}\int_{0}^t\Big|g'(X(r))\Big|\cdot\theta^2(r)dr=0,$$
    where  $L=\underset{x}\sup\,\frac{\sigma^2 (x)}{g^2 (x)} <\infty $.
    Then 
     \begin{equation}\label{I1/Phi_20}
       \underset{t\to \infty}{\lim}\frac{ \mathfrak{I}_1(t)}{\Phi(t)}=1 \,\, \textnormal{a.s.}
     \end{equation}
    Now, continue to prove the stated convergence result and to estimate $ \mathfrak{I}_2(t)$ we will use the Doob's inequality for martingales
    for fixed $\varepsilon>0$ and the convergent series \textbf{IV}
     
     \[
       \begin{aligned}
         \mathsf{P}\left\{\sup_{k \geq n}\frac{\mathfrak{I}_2(2^k)}{\Phi(2^k)}>\varepsilon\right\}&\leq  \mathsf{P}\left\{\sup_{t \geq 2^n}\frac{\mathfrak{I}_2(t)}{\Phi(t)}>\varepsilon\right\}=\\&= \mathsf{P}\left\{\sup_{t \geq 2^n}\frac{1}{\Phi(t)}\left|\int\limits_{0}^t \frac{\sigma(X(r))}{g(X(r))}\cdot \theta(r) dW(r)\right|> \varepsilon\right\} \leq\\
        &\leq\sum_{k=n}^{\infty}\mathsf{P}\left\{\sup_{t \in [2^k;2^{k+1}]}\frac{1}{\Phi(t)}\left|\int\limits_{0}^t \frac{\sigma(X(r))}{g(X(r))}\cdot \theta(r) dW(r)\right|> \varepsilon\right\}\leq\\
        &\leq \frac{4L}{\varepsilon^2}\sum_{k=n}^{\infty}\frac{\Theta(2^{k+1})}{\Phi^2(2^n)} \longrightarrow\ 0,\ \text{as}\ n\to\infty.
       \end{aligned}
      \]
    
    Note also that from the last relation, given that the sequence $\underset{t \geq 2^n}\sup\,\frac{\mathfrak{I}_2(t)}{\Phi(t)},\ n\in\mathbb{N}$, is monotonically decreasing, we also get
      $
        \sup_{t \geq 2^n}\frac{\mathfrak{I}_2(t)}{\Phi(t)}\ \longrightarrow\ 0$,  as $n\to \infty$  a.s.

     Let $t\geq2$ and $n\in\mathbb{N}$ such that $2^n\leq t < 2^{n+1}$. Then
    \[ \begin{aligned}
       &\left \vert\frac{\mathfrak{I}_2(t)}{\Phi(t)}\right\vert \leq\left\vert\frac{\mathfrak{I}_2(t)}{\Phi(t)}-\frac{\mathfrak{I}_2(2^n)}{\Phi(2^n)}\right\vert+\frac{\mathfrak{I}_2(2^n)}{\Phi(2^n)}\leq \sup_{t \in [2^n;2^{n+1}]}\left\vert\frac{\mathfrak{I}_2(t)}{\Phi(t)}-\frac{\mathfrak{I}_2(2^n)}{\Phi(2^n)}\right\vert+\frac{\mathfrak{I}_2(2^n)}{\Phi(2^n)}
      \\&
 \leq \sup_{t \in [2^n;2^{n+1}]} \frac{\Phi(t)-\Phi(2^n)}{\Phi(t)\Phi(2^n)}\cdot \left\vert\mathfrak{I}_2(2^n)\right\vert+\\&+\sup_{t \in [2^n;2^{n+1}]}\frac1{\Phi(t)} \left\vert\int\limits_{2^n}^{t}\frac{\sigma(X(r))}{g(X(r))}\cdot \theta(r) dW(r)\right\vert+\frac{\mathfrak{I}_2(2^n)}{\Phi(2^n)}\leq\\&
    \leq\left(\frac{\Phi(2^{n+1})}{\Phi(2^n)}-1\right)\cdot \frac{\left\vert \mathfrak{I}_2(2^n)\right\vert}{\Phi(t)} +\sup_{t \geq 2^{n}} \frac{\left\vert\mathfrak{I}_2(t)\right\vert}{\Phi(t)}+ \frac{2\mathfrak{I}_2(2^n)}{\Phi(2^n)} \longrightarrow\ 0,\ \text{as}\ n\to\infty \,\, \text{a.s}. \end{aligned}\]
    So, we have 
     \begin{equation}\label{I2/Phi_20}
       \underset{t\to \infty}{\lim}\frac{ \mathfrak{I}_2(t)}{\Phi(t)}=0 \,\, \textnormal{a.s.} 
     \end{equation}
    
    The result of the Theorem \ref{th:AsEq_by_NRF} follows from \eqref{I1/Phi_20}, \eqref{I2/Phi_20} and Lemmas \ref{lem:I3/Phi_20}, \ref{L2}. 
    \end{proof}

    \textbf{Proof of Theorem 4.2}\ \ \ 
    \begin{proof} From the assumptions \textbf{II} and \textbf{VIII} and by the Theorem 2.2.8 
    \cite{BKST-ek} 
     the inverse function $G^{-1}$ for $G$ preserves the asymptotic equivalence. Furthermore $$\Phi(t)=G(\mu(t)),$$ where $\mu$ is a solution $d\mu(t)=g(\mu(t))\phi(t)dt$ and 
       
      \[\underset{t\to \infty}{\lim}\frac{G(X(t))}{\Phi(t)}=\underset{t\to \infty}{\lim}\frac{G^{-1}(G(X(t)))}{G^{-1}(\Phi(t))}=\underset{t\to \infty}{\lim}\frac{X(t)}{\mu(t)}=1 \,\, \text{a.s.}\]
    \end{proof}
    
    \begin{remark}
    In the case when the function $g(\cdot) \in \mathscr{RV}_{\alpha}$, with $\alpha<1$, the condition \textbf{VIII}  holds and theorem 4.1 and 4.2 establishe the conditions for the asymptotic equivalence of the solutions of the SDE with jumps and the corresponding ordinary differential equation.  
    \end{remark}
    
    \section{Unboudedness of the solution of \\non-homogeneous SDEs driven by L\'{e}vy  noise}
    
    In the first part of the paper, we consider the asymptotic equivalence of the solution to a deterministic equation and a stochastic differential equation, in which a Wiener process and a Poisson noise are mixed. The main result is obtained under the assumption \textbf{II}, which yields immediately that the system \eqref{ode} is unbounded as
    $\underset{t\to \infty}{\lim} \mu (t)=\infty.$ One of the basic assumptions of this paper is that the solution of the
    SDE  (\ref{SDEwJmps}) a.s. increases indefinitely and tends to infinity:
    $\mathsf{P}\{\underset{t\to\infty}\lim\,X(t)=\infty\}>0.$
    The conditions of unboundedness of solutions are presented, for example, for autonomous SDRs in \cite{GS}, and for non-autonomous ones  were studied in \cite{KTu}.
    
    Denote
    $\displaystyle B(x)=\int_0^x\frac{dr}{\sigma(r)}.$
    We further assume that
    $\underset{x\to\infty}\lim\,B(x)=\infty$.
    
    In order  to be able to prove main result in this section we need to formulate the law of large number  for martingale.
    \begin{lemma}\label{lem:mart} 
     Let $M(t)$ be a squared integrable martingale. If the series 
      \[
        \sum_{k=1}^{\infty}\frac{1}{2^{k+1}}\mathsf{E}M^2(2^{k+1})<\infty,
     \]
    then 
     \[
       \underset{t\to\infty}{\lim }\frac{M(t)}{\sqrt{2t\ln \ln t}} =0\,\,\textnormal{a.s.}
     \]
    \end{lemma}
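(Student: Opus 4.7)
The plan is to prove this by the classical dyadic sub-sequence argument combined with Doob's maximal $L^{2}$-inequality and the first Borel--Cantelli lemma, in the same spirit as the estimates of $\mathfrak{I}_{2}$ and $\mathfrak{I}_{4}$ used earlier in the paper.

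First, I would discretize along the geometric grid $\{2^{k}\}_{k\ge 0}$ and, for a fixed $\varepsilon>0$, introduce the events
\[
A_{k}^{(\varepsilon)} \;=\; \Bigl\{\sup_{t\in[2^{k},2^{k+1}]}\frac{|M(t)|}{\sqrt{2t\ln\ln t}}>\varepsilon\Bigr\}.
\]
Since $\sqrt{2t\ln\ln t}\ge \sqrt{2\cdot 2^{k}\ln\ln 2^{k}}$ on $[2^{k},2^{k+1}]$, each $A_{k}^{(\varepsilon)}$ is contained in $\{\sup_{t\le 2^{k+1}}|M(t)|>\varepsilon\sqrt{2\cdot 2^{k}\ln\ln 2^{k}}\}$. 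Doob's $L^{2}$-maximal inequality applied to the square-integrable martingale $M$ on $[0,2^{k+1}]$ then yields
\[
\mathsf{P}\bigl(A_{k}^{(\varepsilon)}\bigr)
\;\le\;
\frac{\mathsf{E} M^{2}(2^{k+1})}{\varepsilon^{2}\cdot 2\cdot 2^{k}\ln\ln 2^{k}}
\;=\;
\frac{1}{\varepsilon^{2}\ln\ln 2^{k}}\cdot\frac{\mathsf{E} M^{2}(2^{k+1})}{2^{k+1}}.
\]

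Next I would sum in $k$. The hypothesis gives convergence of $\sum_{k}\frac{\mathsf{E} M^{2}(2^{k+1})}{2^{k+1}}$, and the extra factor $(\ln\ln 2^{k})^{-1}$ is bounded (in fact tends to $0$), so $\sum_{k}\mathsf{P}(A_{k}^{(\varepsilon)})<\infty$. By the first Borel--Cantelli lemma, almost surely only finitely many $A_{k}^{(\varepsilon)}$ occur, hence
\[
\limsup_{t\to\infty}\frac{|M(t)|}{\sqrt{2t\ln\ln t}}\;\le\;\varepsilon\qquad\text{a.s.}
\]
Taking $\varepsilon$ through a countable sequence $\varepsilon_{n}\downarrow 0$ and intersecting the corresponding full-measure sets gives $\lim_{t\to\infty}M(t)/\sqrt{2t\ln\ln t}=0$ a.s., as claimed.

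The only delicate point is the transition from the dyadic sub-sequence to continuous time; this is precisely why one must take the supremum over each block $[2^{k},2^{k+1}]$ before applying Doob. Once this is handled, the convergence of the series is essentially immediate from the hypothesis together with the observation that the factor $(\ln\ln 2^{k})^{-1}$ makes the new series no worse than the assumed one. No cancellation or fine martingale estimate beyond Doob's inequality is needed, which is consistent with the flavour of the analogous computations performed in Lemma~\ref{L2} and in the treatment of $\mathfrak{I}_{2}$ in the proof of Theorem~\ref{th:AsEq_by_NRF}.
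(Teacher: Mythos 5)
Your proposal is correct and follows essentially the same route as the paper's own proof: dyadic blocks $[2^{k},2^{k+1}]$, the inclusion of each block event into $\{\sup_{t\le 2^{k+1}}|M(t)|\ge \varepsilon\sqrt{2^{k+1}\ln\ln 2^{k}}\}$, Doob's maximal inequality, and summability of the resulting series. The only cosmetic difference is that the paper concludes by showing the tail probability $\mathsf{P}(\sup_{t\ge 2^{m}}|M(t)|/\sqrt{2t\ln\ln t}\ge\varepsilon)\to 0$ directly, whereas you invoke Borel--Cantelli by name; these are the same argument.
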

    \begin{proof}
    Consider, for any $k \geq 0$ and $\epsilon > 0$, the following two events:
    $B_k\subset C_k,$ where $B_k=\left\{\underset{2^k\leq t\leq 2^{k+1}}{\sup }\frac{|M(t)|}{\sqrt{2t\ln \ln t}} \geq\epsilon\right\}$, $C_k=\left\{\underset{2^k\leq t\leq 2^{k+1}}{\sup }\frac{|M(t)|}{\sqrt{2^k\ln \ln 2^k}} \geq\epsilon\right\}$. So,
     \begin{equation}\label{ineq4Bk}
       \begin{aligned}
        \mathsf{P}\left(B_k\right)\leq\mathsf{P}\left(C_k\right)\leq\mathsf{P}\left( \underset{ t\leq 2^{k+1}}{\sup }|M(t)| \geq \epsilon\sqrt{2^{k+1}\ln \ln 2^k}\right)&\leq\\
        \leq\frac{4\mathsf{E}M^2(2^{k+1})}{\epsilon^22^{k+1}\ln \ln 2^k}&\leq\frac{4\mathsf{E}M^2(2^{k+1})}{\epsilon^22^{k+1}}.
       \end{aligned}  
     \end{equation}   
    
    For any $m \geq 1$ and $\epsilon > 0$
     \[
      \begin{aligned}
        \mathsf{P}\left(\underset{ t\geq 2^{m}}{\sup }\frac{|M(t)|}{\sqrt{2t\ln \ln t}} \geq\epsilon\right) 
        &=\mathsf{P}\left(\bigcup_{m=k}^\infty\,\left\{\underset{2^k\leq t\leq 2^{k+1}}{\sup }\frac{|M(t)|}{\sqrt{2t\ln \ln t}} \geq\epsilon\right\}\right)= \\
        &=\mathsf{P}\left(\bigcup_{m=k}^{\infty} B_k\right)\leq\sum_{k=m}^{\infty}\mathsf{P}\left(
     B_k\right).
      \end{aligned}
     \]
    Using \eqref{ineq4Bk} we obtain
     \[
       \mathsf{P}\left(\underset{ t\geq 2^{m}}{\sup }\frac{|M(t)|}{\sqrt{2t\ln \ln t}} \geq\epsilon\right)\leq\frac{1}{\epsilon^2}\sum_{k=m}^{\infty}\frac{4}{2^{k+1}}\mathsf{E}M^2(2^{k+1}),
     \]
    and therefore  
    $\mathsf{P}\left(
     \underset{t\to\infty}{\lim\sup }\frac{|M(t)|}{\sqrt{2t\ln \ln t}} >0\right)=0.$
     Finally,
     
    $$ \underset{t\to\infty}{\lim }\frac{\vert M(t)\vert }{\sqrt{2t\ln \ln t}} =0\,\,\textnormal{a.s.}$$
    and Lemma \ref{lem:mart}  is proved.
    \end{proof}
    
    
    Consider
    
    \[
    \tilde{a}(t,x)=-\frac{\theta '(t)}{\theta^2(t)} B(x)+ \frac{g(x)\phi(t)}{\sigma (x)\theta (t)}-\frac12\sigma '(x)\theta (t).\]
    and denote
    
      \begin{equation}\label{Q__3_1_}
       A(t)=\int\limits_0^t \underset{x\in \mathbb{R}}\inf\, \tilde{a}(r,x)dr.
     \end{equation}
    
    
    In the following theorem we will give conditions that should be examined in order to see if the perturbed equation is unstable.
    \begin{theorem} \label{thm:BX/thta_2_0} 
     Let $g(\cdot)\in \mathbf{C}^{1,+}$, $\sigma(\cdot)\in\mathbf{C}^{1,+}$, $\phi(\cdot)\in\mathbf{C}^{+}$, $\theta(\cdot)\in\mathbf{C}^{+}$, $c(\cdot)\in\mathbf{C}^{+}$, $\gamma(\cdot)\in\mathbf{C}^{+}$ be such that equation \eqref{SDEwJmps}
    has a c\'{a}dl\'{a}g  solution $X(\cdot)$. Assume that
    \begin{itemize}
     \item[(i)] $\underset{t\to\infty}{\lim \inf}\,\frac{A(t)}{\sqrt{2t\ln \ln t} } >1$;
    \item[(ii)] $\displaystyle\int_{|u|<\mathcal{K}}\frac{1}{\theta^2(u)}(B(x+c(x)\gamma(u))-B(x))^2\nu(du)<K|x|^{q}$ for  $q<0$; 
    \item[(iii)] $\displaystyle\int_0^t \int_{|u|<\mathcal{K}}\frac{c^2(X(r-))\gamma^2(u)}{\theta(u)}\nu(du)dr<\infty$;
    \item[(iv)] $\frac{\sigma'(\cdot)}{\sigma^2(\cdot)}$ is bounded.
    \end{itemize}
        Then 
       \[ \underset{t\to\infty}\lim\,\frac {B(X(t))}{\theta(t)}=\infty \,\,\, \textit{a.s.}\]
    \end{theorem}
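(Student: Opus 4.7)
The plan is to apply It\^o's formula to the ratio $Y(t):=B(X(t))/\theta(t)$ and show that the \emph{deterministic} drift, whose pointwise-in-$x$ minorant is exactly $A(t)$, dominates the stochastic fluctuations, which we will bound by $o(\sqrt{2t\ln\ln t})$ using the law-of-large-numbers Lemma \ref{lem:mart}, the classical LIL for Brownian motion, and conditions (ii)--(iv).

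First, since $B'(x)=1/\sigma(x)$ and $B''(x)=-\sigma'(x)/\sigma^2(x)$, It\^o's formula (in the form of Theorem~\ref{Ito}) applied to $B(X(\cdot))$ together with the product rule for the deterministic smooth factor $1/\theta(t)$ yields a decomposition
$$Y(t)=Y(0)+\int_0^t \tilde a(r,X(r))\,dr+W(t)+M_N(t)+R(t),$$
where the drift is precisely $\tilde a(r,X(r))$ (the three terms arise from $-(1/\theta)' B(X)$, $B'(X)g(X)\phi/\theta$, and $\tfrac12 B''(X)\sigma^2\theta$, which is $-\tfrac12\sigma'(X)\theta$), the continuous martingale has coefficient $B'(X)\sigma(X)\theta/\theta\equiv 1$ and so reduces to $W(t)$, $M_N$ is the pure-jump martingale $\int_0^t\!\int\theta(r)^{-1}[B(X(r-)+c(X(r-))\gamma(u))-B(X(r-))]\tilde N(dr,du)$, and $R(t)$ is the Taylor-remainder compensator drift.

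By the very definition of $A(t)$, $\int_0^t\tilde a(r,X(r))\,dr\geq A(t)$, so hypothesis (i) provides $\delta>0$ with $\int_0^t\tilde a(r,X(r))\,dr\geq (1+\delta)\sqrt{2t\ln\ln t}$ eventually. The classical LIL gives $\limsup_t|W(t)|/\sqrt{2t\ln\ln t}=1$ a.s. For $M_N$ the second moment is
$$\mathsf{E}[M_N^2(t)]=\mathsf{E}\int_0^t\!\!\int_{|u|<\mathcal{K}}\frac{\bigl(B(X(r-)+c(X(r-))\gamma(u))-B(X(r-))\bigr)^2}{\theta(r)^2}\nu(du)\,dr,$$
and (ii), together with $X(t)\to\infty$, bounds the inner integral by $K|X(r-)|^q$ with $q<0$, which is small and integrable in $r$; this gives the summability $\sum_k 2^{-(k+1)}\mathsf{E}[M_N^2(2^{k+1})]<\infty$ needed to invoke Lemma \ref{lem:mart} and conclude $M_N(t)=o(\sqrt{2t\ln\ln t})$ a.s. For the remainder $R(t)$, Taylor's formula together with (iv) gives $|R(t)|\leq\tfrac12\sup_y|\sigma'(y)/\sigma^2(y)|\int_0^t\!\!\int \theta(r)^{-1}c^2(X(r-))\gamma^2(u)\,\nu(du)\,dr$, which is finite a.s.\ by (iii) and so is $o(\sqrt{2t\ln\ln t})$.

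Combining these bounds,
$$\liminf_{t\to\infty}\frac{Y(t)}{\sqrt{2t\ln\ln t}}\ \geq\ (1+\delta)-1-0-0\ =\ \delta\ >\ 0\quad\text{a.s.,}$$
which forces $Y(t)\to\infty$, that is $B(X(t))/\theta(t)\to\infty$ a.s. The main obstacle I expect is step on $M_N$: one must confirm that condition (ii) — whose literal statement involves $\theta(u)$ at a jump variable $u$, plausibly a typo for $\theta(r)$ — really does deliver the summability required by Lemma \ref{lem:mart} rather than a merely $O(t)$ second-moment growth; verifying this hinges on the interplay between $q<0$ and the already-assumed divergence $X(t)\to\infty$, and it is also where the interaction with the diverging Brownian term in the LIL is most delicate, since the constant $(1+\delta)$ coming from (i) must strictly beat the value $1$ delivered by the LIL.
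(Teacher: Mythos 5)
Your proposal follows essentially the same route as the paper: the same transformation $f(t,x)=B(x)/\theta(t)$, the same It\^o decomposition into the drift minorized by $A(t)$, the Brownian term handled by the law of the iterated logarithm, the compensated-jump martingale controlled via Lemma~\ref{lem:mart} using (ii), and the compensator remainder bounded via (iii)--(iv). The one caveat you flag --- whether (ii) really yields the summability $\sum_k 2^{-(k+1)}\mathsf{E}[M_N^2(2^{k+1})]<\infty$ --- is indeed the weakest step of the paper's own argument as well (the paper passes from the bound $K|X(r-)|^{q}$ to $K|r|^{q}$ without justification, and your appeal to $X(t)\to\infty$ would be circular here since unboundedness is what this theorem is meant to establish), so your attempt reproduces the paper's proof, including its gap, rather than resolving it.
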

    \begin{proof}
    Consider $f(t,x)=\frac{1}{\theta(t)}\cdot B(x)$. Since $B$ is a strictly increasing
    function,  $ B^{-1}(\theta(t)x)$ is the inverse for $f(t,\cdot)$ with respect
    to the argument $x$. Indeed,
    $$f(t, f^{-1}(t,x))=f(t, B^{-1}(\theta(t)x))=\frac{1}{\theta(t)}B(B^{-1}(\theta(t)x))=\frac{1}{\theta(t)}\cdot x\theta(t)=x.$$
    On the other hand
    $$f^{-1}(t,f(t,x))=B^{-1}\left( \frac{1}{\theta(t)}\cdot \theta(t)\cdot B(x)\right)= B^{-1}(B(x))=x.$$
    By It\^o formula with 
    \[
      f^{\prime}_{t}(t,x)=-\frac{\theta^{\prime}(t)}{\theta^2(t)}\cdot B(x),\ \ \  
      f^{\prime}_{x}(t,x)=\frac{1}{\theta(t)}\cdot \frac{1}{\sigma(x)},\ \ \   
      f^{\prime \prime}_{xx}(t,x)=\frac{1}{\theta(t)}\cdot\frac{\sigma^{\prime}}{\sigma^2(x)}
    \]
    we have
    \begin{align}\label{log1}
    & f(t, X(t))=\nonumber\\
    &=f(0, X(0))+W(t)+\int_0^t \left( -\frac{\theta^{\prime}(r)}{\theta^2(r)}\cdot B(X(r))+\frac{g(X(r))\phi(t)}{\sigma(X(r))\theta(r)}-\frac{1}{2}\sigma^{\prime}(X(r))\theta(r)\right)dr +\nonumber\\
    &+\int_0^t \int_{|u|<\mathcal{K}}\left(\frac{1}{\theta(u)}(B(X(r-)+c(X(r-))\gamma(u))-B(X(r-)))-\frac{c(X(r-))\gamma(u)}{\sigma(X(r-))\theta(u)}\right)\nu(du)dr+\nonumber\\
    &+\int_0^t \int_{|u|<\mathcal{K}} \frac{1}{\theta(u)}(B(X(r-)+c(X(r-))\gamma(u))-B(X(r-)))\tilde{N}(dr,du) =\nonumber\\
    &=f(0, X(0))+W(t)+\mathfrak{I}_5(t)+\mathfrak{I}_6(t)+\mathfrak{I}_7(t).
    \end{align}
    
    By assumption \textbf{(ii)} of the Theorem we obtain
    \begin{align*}
    \mathsf{E}\mathfrak{I}_7^2(2^{k+1})&=\\
    &=\mathsf{E}\int_0^{2^k} \int_{|u|<K}\frac{1}{\theta^2(u)}(B(X(r-)+c(X(r-))\gamma(u))-B(X(r-)))^2\nu(du)dr\leq\\
    &\leq\int_0^{2^k}K|r|^qdr=\frac{K2^{k(q+1)}}{q+1}.
    \end{align*}
    Using Lemma \ref{lem:mart} for $M(t)=\mathfrak{I}_7(t)$,
    $$\sum_{k=1}^{\infty}\frac{1}{2^{k+1}}\mathsf{E}\mathfrak{I}_7^2(2^{k+1})=\frac{K}{2(q+1)}\sum_{k=1}^{\infty}2^{kq}<\infty,$$ 
    as $q<0$, we get
     \[
       \underset{t\to\infty}{\lim }\frac{\mathfrak{I}_7(t)}{\sqrt{2t\ln \ln t}} =0\,\,\textnormal{a.s.}
     \]
    By assumptions \textbf{(iii)} and \textbf{(iv)},
    \begin{align*}
    &\mathfrak{I}_6(t)=\\
    &\int_0^t \int_{|u|<\mathcal{K}}\left(\frac{(B(X(r-)+c(X(r-))\gamma(u))-B(X(r-)))}{\theta(u)}-\frac{c(X(r-))\gamma(u)}{\sigma(X(r-))\theta(u)}\right)\nu(du)dr\leq\\
    &\leq\frac{1}{2}\int_0^t \int_{|u|<\mathcal{K}}\left(\underset{0\leq\zeta\leq1}{\sup\,}\Big|B''(X(r-)+\zeta \cdot c(X(r-))\gamma(u))\Big|\right)\frac{c^2(X(r-))\gamma^2(u)}{\theta(u)}\nu(du)dr\leq \\
      & \leq \frac{1}{2}\underset{x\in\mathbb{R}}{\sup\,\,} \underset{0< |u|\leq \mathcal{K}}{\sup\,}|B''(x,u)| \int_0^t \int_{|u|<\mathcal{K}}\frac{c^2(X(r-))\gamma^2(u)}{\theta(u)}\nu(du)dr<\infty.
    \end{align*}
    
    Therefore
     \[
       \underset{t\to\infty}{\lim }\frac{\mathfrak{I}_6(t)}{\sqrt{2t\ln \ln t}} =0\,\,\textnormal{a.s.}
     \]
    Let $\varepsilon(\cdot)$ be some positive function, such that $\int_0^{\infty}\varepsilon(r)dr<\infty$. Since $$\underset{x\in \mathbb{R}}\inf\, \tilde{a}(t,x)-\varepsilon(t)<\tilde{a}(t,x),$$ then
    $f(t,X(t))>f(0, X(0))+W(t)+\int_0^t(\underset{x\in \mathbb{R}}\inf\, \tilde{a}(r,x)-\varepsilon(r))dr+\mathfrak{I}_6(t)+\mathfrak{I}_7(t)$.
    
    By the
    low of large number for Wiener process we have
    \[
     \begin{aligned}
      \underset{t\to\infty}{\lim\inf }\frac{f(t, X(t))}{\sqrt{t\ln \ln t}}>\underset{t\to\infty}{\lim\inf }\frac{f(0, X(0))}{\sqrt{t\ln \ln t}}&+\underset{t\to\infty}{\lim\inf }\frac{W(t)}{\sqrt{2t\ln \ln t}}+\\
      &+\underset{t\to\infty}{\lim\inf }\frac{1}{\sqrt{2t\ln \ln t}}\left(A(t)-\int_0^t\varepsilon(r)dr\right)+\\
      &+\underset{t\to\infty}{\lim\inf }\frac{\mathfrak{I}_6(t)}{\sqrt{2t\ln \ln t}}+\underset{t\to\infty}{\lim\inf }\frac{\mathfrak{I}_7(t)}{\sqrt{2t\ln \ln t}}.
     \end{aligned}
    \]
    From the obtained inequality follows the statement of the Theorem \ref{thm:BX/thta_2_0}.
    \end{proof}
    
    \begin{theorem}
     If all conditions of Theorem \ref{thm:BX/thta_2_0} hold and $\underset{t\to\infty}\lim \theta(t)=\infty$, then  
      \[
        \underset{t\to\infty}\lim X(t)=\infty\ \text{a.s.}
      \]
    \end{theorem}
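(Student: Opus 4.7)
The plan is to derive the conclusion directly from Theorem \ref{thm:BX/thta_2_0}, using only the monotonicity of $B$. The chain of implications is: the previous theorem gives $B(X(t))/\theta(t) \to \infty$ a.s.; combined with the hypothesis $\theta(t) \to \infty$, this forces $B(X(t)) \to \infty$ a.s.; and then the monotonicity of $B$ allows us to invert this conclusion to $X(t) \to \infty$ a.s. No further stochastic calculus is needed.

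First, I would invoke Theorem \ref{thm:BX/thta_2_0} to obtain an event $\Omega_0$ of full probability on which $B(X(t,\omega))/\theta(t) \to \infty$ as $t \to \infty$. Fix $\omega \in \Omega_0$. For this $\omega$, eventually $B(X(t,\omega))/\theta(t) \geq 1$, so $B(X(t,\omega)) \geq \theta(t)$. Since $\theta(t) \to \infty$ deterministically by hypothesis, one concludes $B(X(t,\omega)) \to \infty$. The use of a deterministic hypothesis on $\theta$ is what lets us carry out both limiting statements on the same set $\Omega_0$ without any measurability fuss.

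Second, I would translate this into a statement about $X(t)$ itself. Because $\sigma(\cdot) \in \mathbf{C}^{+}$ is strictly positive and continuous, the map $B(x)=\int_{0}^{x}\frac{dr}{\sigma(r)}$ is continuous and strictly increasing on $\mathbb{R}$. Under the standing assumption of this section that $\lim_{x\to\infty} B(x)=\infty$, every value $B(M)$ for $M>0$ is finite. Hence, for $\omega\in\Omega_{0}$ and any $M>0$, there exists $T=T(\omega,M)$ such that $B(X(t,\omega))>B(M)$ for all $t\geq T$; strict monotonicity of $B$ then yields $X(t,\omega)>M$ for all $t\geq T$, i.e.\ $X(t,\omega)\to\infty$.

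I do not anticipate any real obstacle, since all the hard analytic work has been absorbed into Theorem \ref{thm:BX/thta_2_0}. The only point that deserves a line of justification is the invertibility/monotonicity argument that passes from $B(X(t))\to\infty$ to $X(t)\to\infty$, and this is immediate from $\sigma>0$ together with the assumption $B(x)\to\infty$ recorded just before Lemma \ref{lem:mart}.
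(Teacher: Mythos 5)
Your proof is correct and follows the same route as the paper, whose own proof consists of the single sentence that the result follows from Theorem \ref{thm:BX/thta_2_0}. You simply make explicit the two elementary steps the paper leaves implicit: using $\theta(t)\to\infty$ to pass from $B(X(t))/\theta(t)\to\infty$ to $B(X(t))\to\infty$, and then using the strict monotonicity of $B$ together with $\lim_{x\to\infty}B(x)=\infty$ to conclude $X(t)\to\infty$ a.s.
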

    \begin{proof}
    The result of the Theorem follows from the Theorem \ref{thm:BX/thta_2_0}
    \end{proof}
  
    \section{Conclusion}\label{sec7}

To conclude, the purpose of this paper is to investigate the asymptotic properties of SDEs perturbed by a Wiener process
and a jump-shaped part with a Poisson random measure. We have focused on SDEs with coefficients expressed as  the product of
two functions: one  non-random and depending on the time, the other depending on a random process. We have:
\begin{itemize}
\item  Proposed a method for studying the asymptotic properties of a solution of a solution to an SDE by comparing it with a solution to an ODE obtained by removing the stochastic part;
\item Proved that under certain  conditions  the asymptotic properties of  SDEs solutions are determined by a nonrandom function.
\item  Provided  results regarding the unboundedness of solutions for
SDEs with time-dependent coefficients  as $t\to \infty$.
\end{itemize}
    
We plan to further our research in order to extend our investigation of the  asymptotic behavior of solutions of SDEs to the multidimensional case. 
 
 \section*{Statements and Declarations}

\begin{itemize}
\item \textbf{Funding.} The article was written within the project "Robustness, Asymptotic Behavior and Stability of
Models Based on Stochastic Differential Equations". This research was supported by  the MSC4Ukraine grant (AvH ID:1233636), which is funded by the European Union.

We would like to thank the Alexander von Humboldt Foundation
 for supporting Ukrainian scientists and their research.

\item \textbf{Conflict of interest.} The authors have no competing interests or other interests that might be perceived to influence the results and/or discussion reported in this paper.
\item \textbf{Authors' contributions.} All authors contributed to of the research concept.
Material preparation and analysis were performed
by all authors.   The first draft of the manuscript was written by O. Tymoshenko.  All authors read and approved the final manuscript.
\end{itemize}
    
    
    

\begin{thebibliography}{00}
    
    \bibitem{App}  
    D. Applebaum (2004),\emph{ L\'{e}vy Processes and Stochastic Calculus}, 2nd edn, Cambridge University Press.
    
    \bibitem{AS} 
     D. Applebaum and M. Siakalli (2009),\emph{ Asymptotic stability of stochastic differential equations driven by Levy noise}, J. Appl. Probab. 46 , no. 4, 1116-1129 https://doi.org/10.1239/jap/1261670692.
    
    \bibitem{Bin} 
    N. H. Bingham, C. M. Goldie, J. L. Teugels (1987), \emph{Regular Variation}, Cambridge University Press.
    
    \bibitem{GS}  
    I.I. Gikhman, A.V. Skorokhod (1972), \emph{Stochastic Differential Equations}, Springer, Berlin.
    
    
    \bibitem{kunita}  
    H. Kunita (2019), \emph{ Stochastic Flows and Jump-Diffusions}, Springer Singapore.
    
    \bibitem{KKR}  
    G. Keller, G. Kersting, and U. Rosler (1984), \emph{On the asymptotic behaviour of solutions of stochastic
    differential equations}, Z. Wahrsch. verw. Geb. 68, 163-184.
    
    
    \bibitem{BKST}  
    V.V. Budygin, O.I. Klesov, J.G. Steinebach, and O.A. Tymoshenko (2008), \emph{On the $\phi$ -asymptotic
    behaviour of solutions of stochastic differential equations}, Theory Stoch. Process. 14,
    11-30.
    
    \bibitem{BKST-ek} 
    V. V. Buldygin, K.-H. Indlekofer, O. I. Klesov, and J. G. Steinebach (2018), \emph{ Asymptotic behavior of solutions of stochastic differential equations. PseudoRegularly Varying Functions and Generalized Renewal Processes}. 
    
    \bibitem{SS} 
    A.M. Samo\v{i}lenkolenko, O.M. Stanzhytskyi (2011), \emph{Qualitative and Asymptotic Analysis of Differential Equations with Random Perturbations}, World Scientific Publishing, Hackensack, NJ .
    
    \bibitem{KT}
    O. I. Klesov,  O. A. Tymoshenko (2019),   \emph{Almost sure asymptotic properties of solutions of a class of non-homogeneous stochastic differential equations.} Modern Mathematics and Mechanics. Fundamentals Problems and Challenges. Switzerland: Springer, 97-114.
    https://doi.org/10.1007/978-3-319-96755-46.
    
    \bibitem{BR} 
    G. Berkolaiko and A. Rodkina (2006), \emph{Almost sure convergence of solutions to non-homogeneous stochastic difference equation}, J. Differ. Equations Appl. 12, pp. 535-553. 
    
    \bibitem{PP} 
    A. Pilipenko, F. Proske (2021), \emph{Small noise perturbations in multidimensional case}, https://doi.org/10.48550/arXiv.2106.09935. 
    
    \bibitem{PPm} 
    A. Pilipenko and F. N. Proske (2018), \emph{ On a selection problem for small noise
    perturbation in the multidimensional case}. Stochastics and Dynamics, 18 (06):1850045.
    
    \bibitem{Yus} 
    V.  Yuskovych (2023), \emph{ On asymptotics of solutions of stochastic defferential equations
    with jumps}, Ukrains'kyi Matematychnyi Zhurnal, Vol. 75, no. 11,  2023, pp. 1570-84, doi:10.3842/umzh.v75i11.7684.
    
    \bibitem{ChH} 
    G. Chalmers, D. Higham (2008), \emph{Asymptotic stability of a jump-diffusion equation and its numerical approximation}, SIAM J. Sci. Comput.  Vol. 31, No. 2, pp. 1141-1155.
    
    \bibitem{KTu}
    Klesov, O.I., Tymoshenko, O.A. (2013), \emph{ Unbounded solutions of stochastic differential equations with time-dependent coefficients}. Ann. Univ. Sci. Budapest Sect. Comput. 41, 25-35.
    
    \end{thebibliography}
    

    \end{document}